\renewcommand{\algorithmicrequire}{\textbf{Input:}}
\renewcommand{\algorithmicensure}{\textbf{Output:}}
\newtheorem{lemma}{Lemma}[section]
\newtheorem{proposition}{Proposition}[section]
\newtheorem{theorem}{Theorem}[section]
\newtheorem{definition}{Definition}[section]
\newcommand{\QQ}{\mathbb Q}
\newcommand{\ZZ}{\mathbb Z}
\newcommand{\OK}{\mathcal O_K}
\newcommand{\OL}{\mathcal O_L}
\newcommand{\tu}{\tilde u}
\newcommand{\fp}{\mathfrak p}
\newcommand{\fq}{\mathfrak q}
\newcommand{\fz}{\mathfrak z}
\newcommand{\F}{\mathbb F}
\newcommand{\Bp}{10^4}
\DeclareMathOperator{\lcm}{lcm}
\begin{document}

\title[]{On the solutions of the Diophantine equation $(x-d)^2+x^2+(x+d)^2=y^n$ for $d$ a prime power}

\author{Angelos Koutsianas}
\address{Department of Mathematics, The University of British Columbia, 1984 Mathematics Road, Vancouver, BC, V6T 1Z2, Canada}
\email{akoutsianas@math.ubc.ca}

\date{\today}

\keywords{Exponential equation, Lehmer sequences, primitive divisors}
\subjclass[2010]{Primary 11D61}

\begin{abstract}
In this paper, we determine the primitive solutions of the Diophantine equation $(x-d)^2+x^2+(x+d)^2=y^n$ when $n\geq 2$ and $d=p^b$, $p$ a prime and $p\leq 10^4$. The main ingredients are the characterization of primitive divisors on Lehmer sequences and the development of an algorithmic method of proving the non-existence of integer solutions of the equation $f(x)=a^b$, where $f(x)\in\ZZ[x]$, $a$ a positive integer and $b$ an arbitrary positive integer.
\end{abstract}

\maketitle

\section{Introduction}

The question when a sum of consecutive powers is a perfect power has a long and rich history. In $1875$ Lucas \cite{Lucas75} asks for the integral solutions of the equation
\begin{equation}
1^2 + 2^2 +\cdots + x^2 = y^2,
\end{equation}
and is Watson who gives the first satisfying solution \cite{Watson18}. In 1956, Sch\"{a}ffer \cite{Schaffer56} generalises Lucas question and studies the equation
\begin{equation}\label{eq:Schaffer}
1^k + 2^k + \cdots +x^k = y^n.
\end{equation}
Sch\"{a}ffer proves that \eqref{eq:Schaffer} has only finitely many solutions for fix $k$ and $n$ except for a finite number of cases which he determines. In \cite{BennettGyoryPinter04} the authors complete solve \eqref{eq:Schaffer} for $k\leq 11$ and the case $n$ is even, $k$ odd and $k\leq 170$ is studied in \cite{Pinter07}. 

The last few years many mathematicians have focused on the more general equation
\begin{equation}\label{eq:general}
x^k + (x + d)^k + \cdots + (x + (r-1)d)^k = y^n,\quad x,y,d,r,k,n\in\ZZ, n\geq 2,
\end{equation}
and many specific cases have been studied. For example, the case $k=3$, $d=1$ and $r\leq 50$ is considered in  \cite{BennettPatelSiksek17} and for $k=2$, $d=1$ and $r\leq 10$ in \cite{Patel17}. Patel and Siksek \cite{PatelSiksek17} prove that \eqref{eq:general} has no solutions for $k$ being even for almost all $r\geq 2$. We also refer to \cite{BaiZhang13,Soydan17,BerczesPinkSavasSoydan18} for more cases.

Among the many different cases of \eqref{eq:general} the case of three powers
\begin{equation}\label{eq:three_terms}
(x-d)^k+x^k+(x+d)^k=y^n,
\end{equation} 
has attracted a lot of attention. Equation \eqref{eq:three_terms} has been studied for small values of $k$ and $d=1$ in \cite{BennettPatelSiksek16,Zhang14} and for $d>1$ in \cite{Zhang17,ArgaezPatel}. In this paper we study the equation
\begin{equation}\label{eq:main}
	(x-d)^2+x^2+(x+d)^2=y^n.
\end{equation}
In \cite{KoutsianasPatel18} the authors determine all non-trivial primitive solutions of \eqref{eq:main} for $d\leq 10^4$. A very natural and interesting question is to study equation (\ref{eq:main}) for an infinitely family of $d$. Very recently Zhang \cite{Zhang17} proved that (\ref{eq:three_terms}) has no solutions for $k=4$ and $n\geq 11$ when $d$ lies in a suitable infinitely family. In this paper we study \eqref{eq:main} when $d$ is a prime power. A solution of \eqref{eq:main} is called \textit{primitive} if $(x,y)=1$. Moreover, a solution is called \textit{non-trivial} if $x\neq 0$.

\begin{theorem}\label{th:main_gcd_one}
Let $n\geq 2$ be an integer. The non-trivial primitive solutions of \eqref{eq:main} where $d=p^b$ with $b\geq 0$, $p$ a prime and $p\leq\Bp$ are the ones in Table \ref{table:solutions}.
\end{theorem}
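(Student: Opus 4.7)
\bigskip
\noindent\textbf{Proof proposal.}

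The plan is to reduce \eqref{eq:main} to a statement about primitive divisors in a Lucas--Lehmer sequence, apply the Bilu--Hanrot--Voutier classification to bound the exponent $n$, and dispose of the residual small exponents by a mixture of the modular method and explicit Thue-type equations.

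First I would expand \eqref{eq:main} as $3x^2+2d^2=y^n$ and run through elementary parity and mod-$3$ arguments. Combined with $(x,y)=1$, these should force $x$ and $y$ odd, $3\nmid y$, and (when $p=3$) $b=0$, cleanly isolating the case $d=1$, $p=3$. Next I would work in the imaginary quadratic field $K=\QQ(\sqrt{-6})$: multiplying by $3$ and factoring gives
\[
(3x+d\sqrt{-6})(3x-d\sqrt{-6}) = 3\,y^n.
\]
Let $\mathfrak p_2,\mathfrak p_3$ be the ramified primes above $2$ and $3$. A valuation analysis at $\mathfrak p_2$, $\mathfrak p_3$ and at the primes dividing $d$, together with the observation that the two conjugate factors share only these primes, should yield an ideal identity
\[
(3x+d\sqrt{-6}) = \mathfrak p_3\cdot\mathfrak b\cdot\mathfrak a^n,
\]
where $\mathfrak b$ lies in a finite, computable set of $d$-supported ideals and $\mathfrak a$ is coprime to its complex conjugate.

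The class group of $K$ has order $2$, so a parity argument on $n$ and on the class of $\mathfrak a$ forces $\mathfrak a$ itself to be principal for odd $n$. This turns the ideal identity into a finite collection of element equations
\[
3x+d\sqrt{-6} = \alpha_i\,(u+v\sqrt{-6})^n,\qquad u,v\in\ZZ,
\]
indexed by finitely many explicit $\alpha_i\in\OK$. Equating the $\sqrt{-6}$-coefficients then produces a relation of the form $\pm d^{b'}=v\cdot L_n(u,v)$, where $\{L_n(u,v)\}_{n\ge 1}$ is a Lucas--Lehmer sequence attached to the pair $(u,v)$. Since $d$ is a prime power, every prime factor of $L_n(u,v)$ must equal $p$, so $L_n(u,v)$ has no primitive prime divisor outside $\{p\}$. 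The Bilu--Hanrot--Voutier classification of defective Lehmer pairs then restricts the admissible $(u,v,n)$ to an explicit short list, forcing $n\le 30$ (and in most subcases much less).

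It remains to handle these small exponents. For $n=4$, a further $2$-descent reduces matters to a Thue equation. For $n=3$ one obtains a cubic Thue or Mordell-type equation parametrised by $d$ that can be solved directly. For each prime $n\in\{5,7,11,13,17,19,23,29\}$, I would attach a Frey--Hellegouarch elliptic curve $E_{x,d}/\QQ$ to a hypothetical solution, invoke modularity and Ribet's level-lowering to force the mod-$n$ Galois representation of $E_{x,d}$ to arise from a weight-$2$ newform of small level depending on $p$, and then discard the resulting newforms by comparing traces of Frobenius at a handful of auxiliary primes. I expect the genuinely hard part to be precisely this last step carried out uniformly in $p$: the level of the Frey curve depends on $p$, so the newform search and elimination must be redone prime by prime, and the bound $p\le\Bp$ in the statement presumably reflects the computational ceiling of this elimination rather than a conceptual obstacle.
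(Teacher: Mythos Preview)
Your reduction through $K=\QQ(\sqrt{-6})$ and the passage to a Lehmer sequence is essentially right (modulo a small slip: for odd $n$ the ideal $\mathfrak a$ is \emph{not} principal, since $\mathfrak p_3$ has order $2$ in the class group; it is $\mathfrak p_3\mathfrak a$ that is principal, which is why the paper writes $\gamma=3u+v\sqrt{-6}$ and divides by $\sqrt{3}$ to obtain a genuine Lehmer pair). The real gap is in your use of Bilu--Hanrot--Voutier. From $\tu_n=p^{b'}$ you cannot conclude that $\tu_n$ is defective: nothing prevents $p$ itself from being the primitive divisor of $\tu_n$, and then BHV tells you nothing about $n$. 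The paper's case analysis makes this explicit: only when $v=p^{b}$ (so that $p\mid(\alpha^2-\beta^2)^2$) is the pair genuinely $n$-defective and $n\le 30$. In the generic case $v=1$ one has $\tu_n=p^{b}$ with $p$ a bona fide primitive divisor, and the bound on $n$ comes instead from the multiplicative order of $\gamma/\bar\gamma$ modulo a prime above $p$, giving only $n\le B_p:=\max(7,p\pm 1)$ depending on $\left(\frac{-6}{p}\right)$. So for $p$ near $5000$ one must dispose of exponents $n$ up to roughly $5000$, not $30$.

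This also means your endgame is mis-aimed. The paper does not use Frey curves or level-lowering at all; for each prime $p\le\Bp$ and each odd prime $n\le B_p$ it must kill the single-variable equation $p^{b}=f_n(u,1)$, which it does by congruences modulo $p$, $p\pm 1$, and auxiliary moduli (forcing large divisibility conditions on $b$ and then reaching a contradiction). The exponents $n=3$ and $n=5$ survive these sieves and are finished off via $S$-unit equations over $\QQ(\sqrt{2})$ and $\QQ(\sqrt{5})$ respectively. Your modular-method plan might in principle be adaptable, but as stated it covers only $n\le 29$, which is not enough, and the level would grow with $p$ in any case; the paper's elementary sieve plus $S$-unit approach is what actually makes the computation over all $p\le\Bp$ and all $n\le B_p$ feasible.
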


The main ingredient of the proof of Theorem \ref{th:main_gcd_one} is the classification of primitive divisors  of Lehmer sequence \cite{BiluHanrotVoutier01}. For fix $d$ this approach has already been used in \cite{KoutsianasPatel18} and in this paper we extend it when $d$ is an arbitrary prime power.

The paper is organized as follows. In Section \ref{sec:Lehmer_preliminaries} we recall the main definitions and terminology about Lehmer sequences. In Section \ref{sec:non_solution_f_an} we describe an algorithmic method of proving the non-existence of integer solutions to the equation $f(x)=a^n$ for given $f(x)\in\ZZ[x]$ and integer $a>1$. In Section \ref{sec:Lehmer_seq} we construct a Lehmer sequence from a solution of \eqref{eq:main} and we study some basic properties of the sequence. Finally, in Section \ref{sec:proof} we give the proof of Theorem \ref{th:main_gcd_one}.

We have used the mathematical software package \texttt{Sage} \cite{Sage} for the computations of this paper. The code can be found at \textit{https://sites.google.com/site/angelos\\koutsianas/research}.

\begin{center}
\begin{tabular}{| c | c |}
	\hline
	$ p $ & $(|x|,y,b,n)$\\
	\hline
	$ 2 $ & $(21,11,1,3)$\\
	\hline
	$ 7 $ & $ (3, 5, 1, 3)$\\
	\hline
	$ 79 $ & $ (63, 29, 1, 3)$\\
	\hline
	$ 223 $ & $ (345, 77, 1, 3)$\\
	\hline
	$ 439 $ & $ (987, 149, 1, 3)$\\
	\hline
	$ 727 $ & $ (2133, 245, 1, 3)$\\
	\hline
	$ 1087 $ & $ (3927, 365, 1, 3)$\\
	\hline
	$ 3109 $ & $ (627, 29, 1, 5)$\\
	\hline
	$ 3967 $ & $ (27657, 1325, 1, 3)$\\
	\hline
	$ 4759 $ & $ (36363, 1589, 1, 3)$\\
	\hline
	$5623$ & $(46725,1877,1,3)$\\
	\hline
	$8647$ & $(89187, 2885, 1, 3)$\\
	\hline
\end{tabular}
\captionof{table}{Non-trivial primitive solutions $(|x|,y,b,n)$.}\label{table:solutions}
\end{center}

\section{Lehmer sequenses}\label{sec:Lehmer_preliminaries}

The characterization of primitive divisors of Lehmer sequences in \cite{BiluHanrotVoutier01} is used in the proof of Theorem \ref{th:main_gcd_one}. In this section we recall the main definitions and terminology about Lehmer sequences and we recommend \cite{BiluHanrotVoutier01} for a more detailed exposition.

Let $\alpha,\beta$ be two algebraic integers such that $(\alpha +\beta)^2$ and $\alpha\beta$ are non-zero coprime rational integers and $\alpha/\beta$ is not a root of unity. Then, the pair $(\alpha,\beta)$ is called a \textit{Lehmer pair}. For a given Lehmer pair $(\alpha,\beta)$ we define the corresponding \textit{Lehmer sequence} given by
\begin{equation}
\tu_n=\tu_n(\alpha,\beta)=
\begin{cases}
\frac{\alpha^n-\beta^n}{\alpha-\beta}, & n\text{ odd},\\
\frac{\alpha^n-\beta^n}{\alpha^2-\beta^2}, & n\text{ even}.
\end{cases}
\end{equation}

\begin{definition}
Let $(\alpha,\beta)$ be a Lehmer pair. A prime number $p$ is called \textit{primitive divisor} of $\tu_n$ if $p$ divides $\tu_n$ but does not divide $(\alpha^2-\beta^2)^2\cdot\tu_1\cdots\tu_{n-1}$. 

In case $\tu_n$ has no primitive divisors then the pair $(\alpha,\beta)$ is called \textit{$n$-defective Lehmer pair}. We say that an integer $n$ is \textit{totally non-defective} if no Lehmer pair is $n$-defective.   
\end{definition}

\begin{theorem}[\cite{BiluHanrotVoutier01}]\label{thm:non_defective}
Every integer $n>30$ is totally non-defective. Moreover, if $n>13$ is a prime then it is is totally non-defective.
\end{theorem}

\begin{definition}
Two Lehmer pairs $(\alpha_1,\beta_1)$ and $(\alpha_2,\beta_2)$ are equivalent if $\alpha_1/\alpha_2=\beta_1/\beta_2\in\{\pm 1,\pm\sqrt{-1}\}$.
\end{definition}

For the integers $1\leq n\leq 30$ the $n$-defective Lehmer pairs are completely determined (up to equivalence) in \cite{Voutier95} (see \cite[Theorem C]{BiluHanrotVoutier01}) and \cite[Theorem 1.3]{BiluHanrotVoutier01}.

\section{No solutions of the equation $f(x)=a^n$}\label{sec:non_solution_f_an}

Let $a\neq 1$ be a positive integer and $f(x)\in\ZZ[x]$. We assume that the $\gcd$ of the coefficients of $f(x)$ divides $a$. We describe an algorithmic method which (when it succeeds) proves that the equation
\begin{equation}\label{eq:f_an}
f(x) = a^n,
\end{equation}
with $n\geq 0$ has no integral solutions.

Let $s$ be an integer coprime to $a$ and $t_s$ the order of $a$ at $\left(\ZZ/s\ZZ\right)^*$. We define
\begin{equation}
W_{s,a}(f) := \{a^k\pmod s:k=1,\cdots, t_s-1\}\bigcap\{f(i)\pmod s:i\in[1,s]\}.
\end{equation}
If $W_{s,a}(f)=\emptyset$ then we can conclude that $t_s\mid n$. Using many different $s$ and taking lcm of the $t_s$ we can find a number $t$ such that if there exists a solutions of \eqref{eq:f_an} then we have $t\mid n$. Then we check if there exists an integer $\ell$ such that $a^t\equiv 1\pmod \ell$ but $f(u)\not\equiv 1\pmod \ell$ for all $u\in\ZZ$. If such an integer $\ell$ exists the equation \eqref{eq:f_an} has no solutions (see Algorithm \ref{alg:non_solution_f_an}). 

\begin{algorithm}
	\caption{No solutions for $f(x)=a^n$}\label{alg:non_solution_f_an}
	\hspace{-2.3cm}{\algorithmicrequire} A polynomial $f(x)\in\ZZ[x]$ and a positive integer $a\neq 1$.\\
	\hspace{-0.5cm}{\algorithmicensure} \textit{True} if there are no solutions $(x,n)$ of $f(x)=a^n$, otherwise \textit{Fail}.
	\vspace{0.2cm}
	\begin{algorithmic}[1]
		\State If the $\gcd$ of the coefficients of $f(x)$ does not divide $a$, \textbf{return} 
			\textit{Fail}.
		\State Choose a finite set $S$ of positive integers $s$ coprime to $a$.
		\State $t\leftarrow 1$.
		\For{each $s\in S$}
			\State Let $t_s$ be the order of $a$ at $\left(\ZZ/s\ZZ\right)^*$.
			\State $W_{s,a}(f) \leftarrow\{a^k\pmod s:k=1,\cdots, t_s-1\}\bigcap\{f(i)\pmod s:i\in[1,s]\}$.
			\If{$W_{s,a}(f)=\emptyset$}
				\State $t\leftarrow \lcm(t,t_s)$			
			\EndIf
		\EndFor
		\State Choose a finite set $L$ of positive integers $\ell$ such that $a^t\equiv 1\pmod\ell$.
		\For{each $\ell\in L$}
			\If{$f(u)\not\equiv 1\pmod \ell$ for all $u\in(\ZZ/\ell\ZZ)$}
				\State \textbf{return} \textit{True}
			\EndIf
		\EndFor
		\State \textbf{return} \textit{Fail}.
	\end{algorithmic}
\end{algorithm}

\section{A Lehmer sequence from a primitive solution of \eqref{eq:main}}\label{sec:Lehmer_seq}

In this section we associate a Lehmer sequence to a primitive solution $(x,y)$ of \eqref{eq:main}. This construction has already been described in \cite[Section 3]{KoutsianasPatel18} and we repeat it for completeness and the convenience of the reader.

Suppose $n\geq 2$. We can rewrite \eqref{eq:main} as
\begin{equation}\label{eq:main_simply}
3x^2 + 2d^2 = y^n.
\end{equation}
Let $(x,y)$ be a non-trivial primitive solution of \eqref{eq:main_simply}. This implies that $x,y,d$ are pairwise coprime, $3\nmid dy$ and $2\nmid xy$.

\begin{lemma}\label{lem:primitive_n_even}
There are no non-trivial primitive solutions of \eqref{eq:main} for $n$ even.
\end{lemma}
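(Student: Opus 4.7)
The plan is to derive a contradiction by passing equation~(\ref{eq:ideal_factorization}) to the class group $\mathrm{Cl}(\OK) \cong \ZZ/2\ZZ$. Recall that $\fp_3$ is non-principal, so $[\fp_3]$ is the unique non-trivial ideal class.

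Suppose for contradiction that $n = 2m$ is even and $(x,y)$ is a non-trivial primitive solution. I would first rewrite (\ref{eq:ideal_factorization}) as
\[
(3x + d\sqrt{-6})\,\OK \;=\; \fp_3 \cdot (\fz^{m})^{2},
\]
and then compare the ideal classes of both sides. The left-hand side is principal by construction, hence it represents the trivial class in $\mathrm{Cl}(\OK)$. On the right-hand side, $[(\fz^{m})^{2}] = [\fz^{m}]^{2} = 1$ because every element of a group of order two squares to the identity. Consequently, the class of the right-hand side is just $[\fp_3]$, and the equation forces $[\fp_3] = 1$.

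This contradicts the non-principality of $\fp_3$ already noted in the discussion preceding the lemma, so no such $(x,y)$ can exist. The argument is essentially a one-line class-group computation, so I do not anticipate a genuine obstacle; the only thing to double-check is that the ideal-theoretic setup leading to (\ref{eq:ideal_factorization}) applies uniformly in the primitivity hypothesis regardless of the parity of $n$, which is clear from the preceding derivation.
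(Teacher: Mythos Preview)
Your argument is correct and is exactly the approach the paper takes: the paper's proof is just the one-line remark that the lemma follows immediately from \eqref{eq:ideal_factorization} together with the non-principality of $\fp_3$, and you have simply spelled out the class-group computation behind that remark.
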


\begin{proof}
Taking equation \eqref{eq:main_simply} $\pmod 3$ we have the conclusion.
\end{proof}

For the rest of this section we assume that $n$ is an odd prime. Let $K = \QQ(\sqrt{-6})$ and write $\OK=\ZZ[\sqrt{-6}]$ for its ring of integers. The field $K$ has class group isomorphic to $(\ZZ/2\ZZ)$. We rewrite equation \eqref{eq:main_simply} as 
\begin{equation}\label{eq:3squares}
(3x)^2 + 6d^2 = 3y^n.
\end{equation}
We factorise the left-hand side of equation~\eqref{eq:3squares} as
\begin{equation}
(3x + d\sqrt{-6})(3x - d\sqrt{-6}) = 3y^n.
\end{equation}
from which we obtain the ideal equation
\begin{equation}
\langle 3x + d\sqrt{-6}\rangle\langle 3x - d\sqrt{-6}\rangle = \fp_3^2\langle y\rangle^n,
\end{equation}
where $\fp_3$ is the unique prime of $\OK$ above $3$. Because $x,y,d$ are pairwise coprime and $3\nmid dy$ it follows that
\begin{equation}\label{eq:ideal_factorization}
\langle 3x + d\sqrt{-6}\rangle = \fp_3 \cdot \fz^n,
\end{equation}
where $\fz$ is an ideal of $\OK$. The ideal $\fp_3$ is not principal, hence $\fz$ is not either, and because $\fp_3^2=\langle 3\rangle$ we have
\begin{equation}
\langle 3x + d\sqrt{-6}\rangle = \fp_3^{1-n} \cdot (\fp_3 \fz)^n
=\langle 3\rangle^{(1-n)/2} (\fp_3 \fz)^n.
\end{equation}
Because the class number of $K$ is two and $\fp_3$, $\fz$ are not principal it holds $\fp_3\fz=\langle \gamma\rangle$ where $\gamma=u^\prime+v^\prime\sqrt{-6} \in \OK$ with $u^\prime,v^\prime\in\ZZ$. We can easily prove that $3\mid u^\prime$. Thus there are $u,v\in\ZZ$ such that $\gamma=3u+v\sqrt{-6}$. After possibly changing the sign of $\gamma$ we obtain
\begin{equation}\label{eq:x_r_gamma}
3x+d\sqrt{-6}=\frac{\gamma^n}{3^{(n-1)/2}}.
\end{equation}
Subtracting the conjugate equation from this equation, we obtain 
\begin{equation}
\frac{\gamma^n}{3^{(n-1)/2}} - \frac{\bar{\gamma}^n}{3^{(n-1)/2}} = 2d\sqrt{-6},
\end{equation}
or equivalently,
\begin{equation}\label{eq:Lehmer_sequence}
\frac{\gamma^n}{3^{n/2}} - \frac{\bar{\gamma}^n}{3^{n/2}} = 2d\sqrt{-2},
\end{equation}
where $\bar\gamma$ is the conjugate of $\gamma$.

Let $L = \QQ(\sqrt{-6}, \sqrt{3}) =  \QQ(\sqrt{-2}, \sqrt{3}) $. Write $\OL$ for the ring of integers of $L$ and let
\begin{equation}
\alpha = \frac{\gamma}{\sqrt{3}} \qquad \text{and} \qquad 
\beta = \frac{\bar{\gamma}}{\sqrt{3}}.
\end{equation}

\begin{lemma}\label{lem:lehmer}
Let $\alpha, \beta$ be as above. Then, $\alpha$ and $\beta$ are algebraic integers. Moreover, $(\alpha+\beta)^2$ and $\alpha\beta$ are non-zero coprime rational integers and $\alpha/\beta$ is not a root of unity. 
\end{lemma}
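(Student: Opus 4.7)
The plan is to introduce explicit coordinates for $\alpha$ and $\beta$ inside $L=\QQ(\sqrt{3},\sqrt{-2})$ and verify each clause directly. Using the identity $\sqrt{-6}=\sqrt{3}\cdot\sqrt{-2}$ together with $\gamma=3u+v\sqrt{-6}$, I would rewrite
\[
\alpha=u\sqrt{3}+v\sqrt{-2},\qquad \beta=u\sqrt{3}-v\sqrt{-2},
\]
which makes it transparent that $\alpha,\beta\in\OL$. A direct expansion then yields $(\alpha+\beta)^2=12u^2$ and $\alpha\beta=3u^2+2v^2$, both in $\ZZ$; taking absolute norms in \eqref{eq:ideal_factorization} gives $N(\fz)=y$, so in fact $\alpha\beta=y$.

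For non--vanishing, I would show $u,v\neq 0$ by comparing the rational and $\sqrt{-6}$--components of \eqref{eq:x_r_gamma}: $u=0$ would force $x=0$, and $v=0$ would force $d=0$, both excluded by hypothesis. Hence $(\alpha+\beta)^2$ and $\alpha\beta$ are both non--zero.

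The coprimality of $12u^2$ and $y$ is the step I expect to take the most care. First I would dispose of the primes $2$ and $3$: the relation $y^n=3x^2+2d^2$ combined with $(x,y)=1$ rules out $2\mid y$ immediately, and rules out $3\mid y$ via a short $3$--adic calculation using $n\geq 2$ (if $3\mid y$ then $3\mid d$, then $9\mid y^n$ and $9\mid 2d^2$ force $3\mid x$, a contradiction). For any other prime $q$ dividing $\gcd(u,y)$, the identity $y=3u^2+2v^2$ together with $q\nmid 2$ forces $q\mid v$, hence $q\mid\gamma$ in $\OK$; then $q^n$ divides $\gamma^n=3^{(n-1)/2}(3x+d\sqrt{-6})$, and since $q\neq 3$ one reads off $q\mid d$ and $q\mid x$, again contradicting $(x,y)=1$.

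Finally, to show $\alpha/\beta=\gamma/\bar\gamma$ is not a root of unity (which is what the Lehmer pair axiom requires), I would observe that the biquadratic field $L$ has exactly three quadratic subfields, namely $\QQ(\sqrt{3})$, $\QQ(\sqrt{-2})$, $\QQ(\sqrt{-6})$, none of which contains $i$ or $\zeta_3$; hence the only roots of unity in $L$ are $\pm 1$. But $\alpha=\beta$ yields $v=0$ and $\alpha=-\beta$ yields $u=0$, both excluded. The main obstacle will be the coprimality clause, as it is the only part that genuinely mixes rational--integer, ideal, and congruence information; the rest is essentially formal computation in $\OL$.
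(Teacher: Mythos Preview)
Your argument is correct and follows essentially the same route as the paper: explicit coordinates $\alpha=u\sqrt{3}+v\sqrt{-2}$, the identities $(\alpha+\beta)^2=12u^2$ and $\alpha\beta=3u^2+2v^2$, non--vanishing of $u,v$ via \eqref{eq:x_r_gamma}/\eqref{eq:Lehmer_sequence}, and the observation that the ambient field contains only $\pm 1$ as roots of unity. The only presentational differences are that you make the identity $\alpha\beta=y$ explicit and run the coprimality check with rational primes (treating $2,3$ separately), whereas the paper argues with a prime $\fq$ of $\OL$ dividing both quantities and invokes $\gcd(y,d)=1$ directly; conversely, for the root--of--unity step the paper notes that $\alpha/\beta=\gamma/\bar\gamma$ already lies in $K=\QQ(\sqrt{-6})$, so one only needs the (trivial) unit--root structure of $K$ rather than of $L$.
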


\begin{proof}
Let $\gamma=3u+v\sqrt{-6}$ be as above with $u,v\in\ZZ$. Then
$$
(\alpha+\beta)^2=12u^2 \qquad \text{and} \qquad \alpha\beta = 3u^2 + 2v^2.
$$
So, $(\alpha+\beta)^2$ and $\alpha\beta$ are rational integers. If $(\alpha+\beta)^2=0$ then we have $u=0$. However, from \eqref{eq:Lehmer_sequence} and the fact that $n$ is odd we understand that this can not happen. Clearly, $\alpha\beta=3u^2 + 2v^2$ is a non-zero rational integer.

We have to check that $(\alpha+\beta)^2$ and $\alpha\beta$ are coprime. Suppose they are not coprime. Then there exists a prime $\fq$ of $\OL$ dividing both. This implies that $\fq$ divides $\alpha,\beta$ and from \eqref{eq:Lehmer_sequence} we understand that $\fq$ divides $\langle 2d\sqrt{-2}\rangle$. Moreover, from \eqref{eq:x_r_gamma} we have
\begin{equation}\label{eq:y_wrt_u_v}
(\alpha\beta)^n=3x^2+2d^2=y^n,
\end{equation}
from which we conclude that $\fq$ divides $\langle y\rangle$. However, this is a contradiction to the fact that $y,d$ are coprime and $2\nmid y$.

Finally, we need to show that $\alpha/\beta=\gamma/\bar\gamma\in\OK$ is not a root of unity. Suppose $\gamma/\bar\gamma$ is a root of unity. Since the only roots of unity in $K$ are $\pm 1$ we conclude $\gamma=\pm\bar\gamma$. Then, either $v=0$ or $u=0$ which both can not hold because of \eqref{eq:Lehmer_sequence}.
\end{proof}

From Lemma \ref{lem:lehmer} the pair $(\alpha,\beta)$ is a Lehmer pair. Because $(\alpha + \beta)^2$ and $\alpha\beta$ are coprime integers that means $\gcd(12u^2,3u^2 + 2v^2)=1$, and hence $\gcd(3u,2v)=1$. Moreover, from \eqref{eq:y_wrt_u_v} we have that $y = 3u^2+2v^2$. We denote by $\{\tu_k\}$ the associate Lehmer sequence to the Lehmer pair $(\alpha,\beta)$. Substituting into equation \eqref{eq:Lehmer_sequence} we have
\begin{equation}
\left(\frac{\alpha-\beta}{2\sqrt{-2}} \right)\left(\frac{\alpha^n - \beta^n}{\alpha-\beta}\right) = d.
\end{equation}

Hence, we get
\begin{equation}
\frac{\alpha^n - \beta^n}{\alpha-\beta} = d /v,
\end{equation}
from which we understand that $v\mid d$. 
We define
\begin{equation}\label{eq:fk}
f_n(x,y) = \sum_{i=0}^{ (n-1)/2}\binom{n}{2i+1}(-2)^i 3^{\frac{n-1}{2}-i}x^{n-1-2i}y^{2i}\in\ZZ[x,y].
\end{equation}

After an elementary calculation we can see that
\begin{equation}
\tu_n = f_n(u,v).
\end{equation}

\section{Proof of Theorem \ref{th:main_gcd_one}}\label{sec:proof}

In this section we give the proof of Theorem \ref{th:main_gcd_one}. Let $(x,y)$ be a primitive non-trivial solution of \eqref{eq:main} for $n\geq 2$ and $d=p^b$ where $p$ is a prime. As we have already mentioned this implies that $x,y,d$ are pairwise coprime, so we have $p\neq 3$. Because of Lemma \ref{lem:primitive_n_even} there are no solutions for $n$ even, hence we can assume that $n$ is an odd prime. Let $K=\QQ(\sqrt{-6})$. We recall from Section \ref{sec:Lehmer_seq} that there exists $\gamma=3u+v\sqrt{-6}\in\OK$ with $u,v\in\ZZ$ such that
\begin{equation}
3x+p^b\sqrt{-6}=\frac{\gamma^n}{3^{(n-1)/2}}.
\end{equation}
The elements 
\begin{equation}
\alpha = \frac{\gamma}{\sqrt{3}} \qquad \text{and} \qquad 
\beta = \frac{\bar{\gamma}}{\sqrt{3}}.
\end{equation}
define a Lehmer sequence $\{\tu_k\}$ (see Lemma \ref{lem:lehmer}). It holds
\begin{equation}\label{eq:bprime}
\tu_n=\frac{\alpha^n - \beta^n}{\alpha-\beta}=  p^b /v.
\end{equation} 
From the last equation we conclude that $v=\pm p^t$ for $0\leq t\leq b$. We also recall that $\gcd(3u,2v)=1$ from which we conclude that $p\nmid u$.

We split the proof in cases according to the values of $b$ and $v$. The case $b=0$ is a consequence of the following result due to Nagell \cite{Nagell55}.

\begin{lemma}\label{lem:Nagell}
Let $D\geq 3$ be an odd number. Then the equation
\begin{equation}
	2+Dx^2=y^n,n>2,
\end{equation}
has no integer solutions $(x,y,n)$ with $n\nmid h(-2D)$ where $h(-2D)$ is the class number of $\QQ(\sqrt{-2D})$.
\end{lemma}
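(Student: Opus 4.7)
The plan is to follow Nagell's classical strategy: factor the equation in the imaginary quadratic field $K=\QQ(\sqrt{-2D})$, isolate an $n$-th power of an ideal, use the hypothesis $n\nmid h(-2D)$ to pass to an $n$-th power of an element, and finally derive a contradiction by comparing coefficients of $\sqrt{-2D}$.

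First I would reduce to the case where $n$ is an odd prime by replacing $y$ with $y^m$ whenever $n=pm$, and run the standard parity checks: since $D$ is odd, a mod $4$ analysis forces $y$ odd (otherwise $Dx^2\equiv -2\pmod 4$), and a common prime of $y$ and $D$ would have to divide $2$, so $\gcd(y,2D)=1$. Multiplying the equation by $D$ and factoring in $\OK=\ZZ[\sqrt{-2D}]$ yields
$$
(Dx+\sqrt{-2D})(Dx-\sqrt{-2D})=Dy^n.
$$
Each prime $p\mid D$ ramifies in $K$, so if $\mathfrak{A}=\prod_{p\mid D}\fp_p$ then $\mathfrak{A}^2=(D)$, and $\gcd(y,2D)=1$ combined with $x$ odd shows that the two factors share no prime divisor outside $\mathfrak{A}$. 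This yields the ideal factorisation $(Dx+\sqrt{-2D})=\mathfrak{A}\cdot \mathfrak{I}^n$ for some integral ideal $\mathfrak{I}$ coprime to $\mathfrak{A}\bar{\mathfrak{A}}$.

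Because $\mathfrak{A}^2=(D)$ the class $[\mathfrak{A}]$ has order at most $2$, and the identity $[\mathfrak{A}][\mathfrak{I}]^n=1$ in the class group combined with $\gcd(n,h(-2D))=1$ (which holds for $n$ prime and $n\nmid h$) forces $[\mathfrak{I}]=[\mathfrak{A}]$, so $\mathfrak{A}\mathfrak{I}=(\gamma)$ is principal for some $\gamma\in\OK$. Raising to the $n$-th power, using $\mathfrak{A}^{n-1}=(D)^{(n-1)/2}$ (recall $n$ is odd), and absorbing the unit $\pm 1$ gives
$$
\gamma^n=\pm D^{(n-1)/2}\bigl(Dx+\sqrt{-2D}\bigr).
$$
Writing $\gamma=a+b\sqrt{-2D}$ with $a,b\in\ZZ$ and comparing coefficients of $\sqrt{-2D}$ produces the Diophantine identity
$$
\pm D^{(n-1)/2}=\sum_{j=0}^{(n-1)/2}\binom{n}{2j+1}a^{n-1-2j}b^{2j+1}(-2D)^{j}.
$$

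The main obstacle is extracting a contradiction from this identity. A valuation computation at each $\fp_p$ shows $p\mid a$ but $p\nmid b$, so (assuming $D$ squarefree, to which the general case reduces by a standard descent) $D\mid a$ and $\gcd(b,D)=1$; combined with the fact that $b$ divides the left-hand side, this forces $b=\pm 1$. Writing $a=Da'$ and dividing through by $D^{(n-1)/2}$ gives an identity in $a'$ whose leading term is $\pm nD^{(n-1)/2}{a'}^{n-1}$ and whose constant term is $(-2)^{(n-1)/2}$. The plan for ruling it out is to reduce modulo $D$ (yielding $(-2)^{(n-1)/2}\equiv\pm 1\pmod D$) and then bound the leading term against the tail to force $a'=0$, which combined with the norm relation $N(\gamma)=a^2+2Db^2=Dy$ would give $y=2b^2$, contradicting $y$ odd. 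In general this final Thue-type step is the delicate one; in the spirit of the present paper one could alternatively recognise the identity as expressing a Lehmer-sequence value $\tilde u_n$ and invoke the primitive divisor theorem of Bilu--Hanrot--Voutier (Theorem~\ref{thm:non_defective}) to conclude.
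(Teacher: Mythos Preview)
The paper does not prove this lemma at all; it is simply quoted as a result of Nagell \cite{Nagell55} and invoked as a black box for the case $b=0$, so there is no ``paper's proof'' to compare against. Your outline is the classical one, and the algebraic number theory is set up correctly: the factorisation in $\QQ(\sqrt{-2D})$, the ideal identity $(Dx+\sqrt{-2D})=\mathfrak{A}\,\mathfrak{I}^n$, the deduction that $\mathfrak{A}\mathfrak{I}$ is principal when $n$ is prime to $h(-2D)$, and the conclusion $b=\pm1$, $D\mid a$ are all sound (for $D$ squarefree).

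The genuine gap is exactly the step you yourself label ``the delicate one'' and do not carry out. After writing $a=Da'$ the identity reads
\[
\pm 1=\sum_{j=0}^{(n-1)/2}\binom{n}{2j+1}(-2)^{j}D^{(n-1)/2-j}(a')^{\,n-1-2j},
\]
and your plan to ``bound the leading term against the tail to force $a'=0$'' does not work as stated: the factor $(-2)^j$ makes the terms alternate in sign, so there is real cancellation and no naive size comparison disposes of small nonzero $|a'|$. Nagell's actual argument requires a finer ($2$--adic and congruence) analysis of this sum, which your sketch does not supply; the suggested fallback to Bilu--Hanrot--Voutier would likewise leave all primes $n\le 7$ to be treated by hand. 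A smaller but real issue is your reduction to prime $n$: from $n\nmid h(-2D)$ it does \emph{not} follow that some prime divisor $p$ of $n$ satisfies $p\nmid h(-2D)$ (take $n=9$, $h=3$), so the replacement $y\mapsto y^m$ needs the stronger hypothesis $\gcd(n,h(-2D))=1$ or a separate treatment of prime powers.
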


In our case we have $D=3$ and $h(-6)=2$, hence there are no solutions of \eqref{eq:main} for $b=0$. For the rest of the proof we assume that $b\geq 1$. We need the following lemma.

\begin{lemma}\label{lem:defective_pairs}
There are no $n$-defective pairs for the Lehmer pair $(\alpha,\beta)$ where $\alpha=\frac{\gamma}{\sqrt{3}}$ and $\beta=\frac{\bar\gamma}{\sqrt{3}}$ with $n\leq 30$ an odd prime and $v=p^t$ with $0<t\leq b$ unless $(u,v,p,b,n)=(\pm 1,\pm 2,2,1,3)$.
\end{lemma}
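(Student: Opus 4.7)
The plan is to invoke the complete classification of $n$-defective Lehmer pairs given in \cite{BiluHanrotVoutier01} and \cite{Voutier95}. First I would record the BHV invariants of our pair. Writing $\gamma = 3u + p^b\sqrt{-6}$ and $\alpha,\beta = \gamma,\bar\gamma$ divided by $\sqrt{3}$, a direct computation yields
$$
(\alpha+\beta)^2 \;=\; 12u^2, \qquad (\alpha-\beta)^2 \;=\; -8p^{2b}, \qquad \alpha\beta \;=\; 3u^2+2p^{2b},
$$
so up to the equivalence relation of Definition 2.2 our Lehmer pair is determined by $(A,B) = (12u^2,\,-8p^{2b})$, with $u\in\ZZ\setminus\{0\}$, $p$ a prime with $p\neq 2,3$, and $b\geq 1$. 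In particular $A>0$, $B<0$ and, by Lemma \ref{lem:lehmer}, $\gcd(A,\alpha\beta)=1$, which forces $u$ to be odd and coprime to $p$.

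Next I would reduce the range of $n$. Since $n$ is an odd prime and Theorem \ref{thm:non_defective} states that every prime $n>7$ is totally non-defective, only the three values $n\in\{3,5,7\}$ need to be examined. For each such $n$ I would compare $(A,B)$ against the explicit list of $n$-defective Lehmer pairs given up to equivalence in \cite[Theorems C and 1.3, Tables 1--4]{BiluHanrotVoutier01}, together with the sporadic entries of \cite{Voutier95}. The rigid shape $A=12u^2$, $B=-8p^{2b}$, combined with sign, divisibility, and coprimality constraints, eliminates almost every tabulated pair either by direct inspection or by a congruence obstruction modulo $3$, $4$, or $8$.

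The step that requires the most care is the handful of infinite parametric families of $3$- and $5$-defective pairs. In each family, demanding that the family's invariants match $(12u^2,-8p^{2b})$ produces an auxiliary polynomial Diophantine condition in $u$ and $p^b$; I expect each such condition to be resolved either by a local obstruction at a small prime or by showing that the only integer solutions force $u=0$ (which is excluded because of \eqref{eq:Lehmer_sequence}) or force $p^b$ to have more than one prime factor, contradicting the assumption that $p$ is a single prime. The $n=7$ case is the easiest, as the list of $7$-defective pairs in \cite{BiluHanrotVoutier01} is entirely sporadic and can be checked by direct comparison. The main obstacle is therefore the bookkeeping for the $n=3$ and $n=5$ families rather than any single conceptual step.
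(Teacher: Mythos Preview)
Your setup matches the paper exactly: you compute $(\alpha+\beta)^2 = 12u^2$ and $(\alpha-\beta)^2 = -8p^{2b}$, reduce to small odd prime $n$ via Theorem~\ref{thm:non_defective}, and then propose to confront the pair $(12u^2,-8p^{2b})$ with the tables of \cite{BiluHanrotVoutier01,Voutier95}. For $n>5$ (in particular $n=7$) this is precisely what the paper does: a direct inspection of the tabulated pairs shows that no entry has the shape $(\pm 12u^2,\mp 8v^2)$.

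Where you diverge from the paper is at $n=3$ and $n=5$. You plan to wade through the infinite parametric families in the BHV tables and eliminate each by an auxiliary Diophantine or congruence argument. That can be made to work, but it is considerably more bookkeeping than necessary, and as written you only \emph{expect} the obstructions to materialise rather than exhibit them. The paper bypasses the tables entirely at this point: in the case $v=p^b$ one has $\tilde u_n = p^b/v = 1$, so the question is simply whether $f_n(u,p^b)=1$ has an integer solution. For $n=3$ this reads $9u^2 - 2p^{2b} = 1$, which is impossible modulo $4$ (the left side is $u^2-2\equiv 2$ or $3\pmod 4$); the paper asserts the same kind of one-line obstruction for $n=5$. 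This is both shorter and self-contained, since it does not require reproducing and analysing the parametric families. Your route is not wrong, but you are overlooking the much cheaper argument that the explicit form $\tilde u_n = f_n(u,v)$ makes available once you know $\tilde u_n = 1$.
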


\begin{proof}
Because $\alpha=\frac{\gamma}{\sqrt{3}}=\sqrt{3}u+v\sqrt{-2}$ and $\beta=\sqrt{3}u-v\sqrt{-2}$ we have $(\alpha+\beta)^2 = 12u^2$ and $(\alpha-\beta)^2=-8v^2$. From the definition of equivalence Lehmer pairs we understand that the pair $(\pm 12u^2,\mp 8v^2)$ has to be in Table $2$ or Table $4$ in \cite{BiluHanrotVoutier01} (see also Table $2$ in \cite{Voutier95}). This never happens for $n>5$.

\textbf{Case $n=3$:} We recall that $f_3(u,v)=9u^2-2v^2$. Because $v=p^t$ and $t>0$ if $t\neq b$ we have $p\mid u$ which is a contradiction. Suppose $t=b$ then we end up with the equation $9u^2-2p^{2b}=\pm 1$. For $p\neq 2$ we can prove that there is no integer solutions by taking the equation $\pmod 4$ or $\pmod 3$. For $p=2$ we have
\begin{equation}
(3u-1)(3u+1) = 2^{2b+1}\qquad \text{or} \qquad 9u^2 + 1=2^{2b+1}.
\end{equation}
We can prove that the latter equation has no solutions by taking the equation $\pmod 4$. For the first equation we can assume that $u\equiv 1\pmod 4$. Thus $3u-1=2$ which means $u=1$. Hence, we conclude that $b=1$ and $v=\pm 2$.

\textbf{Case $n=5$:} We recall that $f_5(u,v)=45u^4-60u^2v^2 + 4v^4=5(3u^2 - 2v^2)^2 - 16v^4$. Similar to the case $n=3$ for $t>0$ and $p\neq 5$ if $t\neq b$ we have $p\mid u$ which is a contradiction. For $t=b$ and $p\neq 5$ we end up with the equation $5(3u^2 - 2p^{2b})^2 - 16p^{4b}=\pm 1$. We can prove that there is no integer solution by taking the equation $\pmod 5$ or $\pmod 4$.

Finally, we have the case $p=5$. As above for $b-t\geq 2$ we can prove that $5\mid u$ which is a contradiction. We consider the case $b = t+1$. Since $t>0$ we have $b-1>0$ and so 
\begin{equation}
5(3u^2 - 2\cdot 5^{2(b-1)})^2 - 16\cdot 5^{4(b-1)}=\pm 5.
\end{equation}
Taking the equation $\pmod{25}$ we get $15u^4\equiv \pm 5\pmod{25}$ and we conclude $3u^4\equiv \pm 1\pmod 5$ which is a contradiction. For the case $b=t$ we have 
\begin{equation}
5(3u^2 - 2\cdot 5^{2b})^2 - 16\cdot 5^{4b}=\pm 1
\end{equation} 
It is enough to take the equation $\pmod 5$ to get a contradiction.
\end{proof}

\begin{proposition}
We continue with the above notation and assumptions. There are no primitive non-trivial solutions $(x,y)$ of \eqref{eq:main} with $n$ an odd prime and $t>0$ unless $(|x|,y,p,b,n)=(21,11,2,1,3)$.
\end{proposition}

\begin{proof}
Let $(\alpha,\beta)$ be the corresponding Lehmer pair to a solution $(x,y)$. It holds $(\alpha^2 - \beta^2)^2=-96u^2v^2$. Because $t>0$ we understand that $p\mid v$ which implies $p\mid (\alpha^2 - \beta^2)^2$. Thus the Lehmer pair $(\alpha,\beta)$ is $n$-defective and according to Lemma \ref{lem:defective_pairs} this holds only for $(u,v,p,b,n)=(\pm 1,\pm 2,2,1,3)$ which corresponds to the solution given above.
\end{proof}

To finish the proof of Theorem \ref{th:main_gcd_one} we have to consider the case $v=\pm 1$. Similar to \cite[Lemma 3.2]{KoutsianasPatel18} we are able to bound $n$.

\begin{lemma}\label{lem:B}
Suppose $p\neq 3$ be a prime. Let
$$
B=\begin{cases}
p-1, & \text{if } \left(\frac{-6}{p}\right)=1,\\
p+1, & \text{if } \left(\frac{-6}{p}\right)=-1.
\end{cases}
$$
Let
$$
B_p:=\max\left(13,B\right).
$$
Then $n\leq B_p$.
\end{lemma}

\begin{proof}
Recall that the exponent $n$ is an odd prime. Suppose $n>13$. By the theorem of Bilu, Hanrot and Voutier, $\tu_n=(\alpha^n-\beta^n)/(\alpha-\beta)=\pm p^b$ is divisible by $p$ while $p$ divides neither $(\alpha^2-\beta^2)^2=-96 u^2 v^2$ nor the terms $\tilde{u}_1,\tilde{u}_2,\dotsc,\tilde{u}_{n-1}$. This does not hold for $p=2$, hence for this case we have $n\leq 13$. Suppose that $p\neq 2$, then $p$ does not divide $6v$. Let $\fp$ be a prime of $K=\QQ(\sqrt{-6})$ above $p$. As $(\alpha+\beta)^2$ and $\alpha\beta$ are coprime integers, and as $\alpha$, $\beta$ satisfy \eqref{eq:bprime} we see that $\gamma$, $\overline{\gamma}$ are not divisible by $\fp$. We claim the multiplicative order of the reduction of $\gamma/\overline{\gamma}$ modulo $\F_\fp$ divides $B_p$. If $-6$ is a square modulo $p$, then $\F_\fp=\F_p$ and so the multiplicative order divides $p-1=B_p$. Otherwise, $\F_\fp=\F_{p^2}$. However, $\gamma/\overline{\gamma}$ has norm $1$, and the elements of norm $1$ in $\F_{p^2}^*$ form a subgroup of order $p+1=B_p$. Thus in either case
$$
(\gamma/\overline{\gamma})^{B_p} \equiv 1 \pmod{\fp}.
$$
This implies that $p \mid \tilde{u}_{B_p}$. As $p$ is primitive divisor of $\tilde{u}_{n}$ we see that $n \le B_p$, proving the lemma.
\end{proof}

From Lemma \ref{lem:B} we know that $n\leq B_p$. For the values of $n\leq B_p$ we have to solve the equation
\begin{equation}\label{eq:eq_v_1}
\pm p^b=f_n(u,\pm 1).
\end{equation}
From the definition of $f_n(x,y)$ it holds $f_n(u,\pm 1)=f_n(u,1)$. For the rest of the paper we write $f_n(u)$ instead of $f_n(u,1)$. 

In general we do not expect solutions of \eqref{eq:eq_v_1} for big $n$ and we prove that by showing that there are no solutions of the congruence equation
\begin{equation}
\pm p^b \equiv f_n(u)\pmod s,
\end{equation}
for $s=p, p\pm 1$. This elementary criterion works for almost all cases. However, there are pairs $(p,n)$ for which it does not work. For these cases we apply Algorithm \ref{alg:non_solution_f_an} from Section \ref{sec:non_solution_f_an} which succeeds for all $n\geq 5$ apart from $(p,n) = (3109,5)$.


For $n=3,5$ the problem can be reduced to the problem of solving a certain $S$-unit equation\footnote{It can also be reduced to the problem of computing integral points on the elliptic curves $$9Y^2-2 = cX^3$$ for $c=1,p,p^2$. However, for big $p$ it is hard to compute the integral points on the elliptic curve.}. Let consider the case $n=3$. We recall that $f_3(u) = 9u^2-2$. We define $L=\QQ(\sqrt{2})$ and $\epsilon=1+\sqrt{2}$ is a generator of the free part of the unit group of $L$. Then for a prime $p$ such that $\left(\frac{2}{p}\right) = 1$ let $w$ be a generator of the prime ideal $\fp$ in\footnote{The class number of $L$ is $1$.} $L$ such that $\fp\mid p$. For an element $x\in L$ we denote by $\bar x$ its conjugate. Then we can prove that
\begin{equation}
3u-\sqrt{2} = (-1)^{b_0} \epsilon^{b_1}w^{b},
\end{equation}
for some $b_0,b_1\in\ZZ$. After conjugating and subtracting we have 
\begin{equation}\label{eq:S_unit_n3}
1 = (-1)^{b_0} \epsilon^{b_1}w^{b} (\sqrt{2})^{-3} + (-1)^{b_0} \bar\epsilon^{b_1}\bar w^{b} (-\sqrt{2})^{-3},
\end{equation}
which is an $S$-unit equation.

For $n=5$ we recall that $f_5(u)=45u^4 - 60u^2 + 4$. For this case we have to solve the equation
\begin{equation}
(15u^2-10)^2 - 80 = 5p^b.
\end{equation}
Similar to the case $n=3$ and working over $N=\QQ(\sqrt{5})$ for $p\neq 2,5$ we have
\begin{equation}
(15u^2-10) - 4\sqrt{5} = (-1)^{b_0}\epsilon^{b_1}5^{b_2}w^b,
\end{equation}
where $b_0,b_1,b_2\in\ZZ$, $\epsilon=(1+\sqrt{5})/2$ and $w$ is a generator\footnote{The class number of $N$ is $1$.} of the prime $\fp$ in $N$ above $p$.
Conjugating and subtracting we end up with the following $S$-unit equation
\begin{equation}\label{eq:S_unit_n5}
1 = (-1)^{b_0}\epsilon^{b_1}\sqrt{5}^{b_2}w^b 2^{-3}- (-1)^{b_0}\bar\epsilon^{b_1}(-\sqrt{5})^{b_2}\bar w^b 2^{-3}.
\end{equation}

Using standard and well-known algorithms (see \cite{Smartbook,Smart95,TzanakisDeWeger89,TzanakisDeWeger92}) we can find an upper bound for $b$ in \eqref{eq:S_unit_n3} and \eqref{eq:S_unit_n5}. Since we have the upper bound for $b$ we can compute $u$ looking for integer solutions of \eqref{eq:eq_v_1}. The code we have used to bound $b$ is based on author's thesis \cite{Koutsianas16} (see also \cite{Koutsianas17,AlvaradoKoutsianasMalmskogRasmussenVincentWest19}).

We have written a \texttt{Sage} script that does all the above computations. Finally, the complete list of primitive solutions of \eqref{eq:main} with $d=p^b$ and $p\leq\Bp$ are those in Table \ref{table:solutions}.

\section*{Acknowledgement}

The author is grateful to Professor John Cremona for providing access to the servers of the Number Theory Group of Warwick Mathematics Institute where all the computations took place.

\bibliographystyle{alpha}
\bibliography{Sum_of_consecutive_squares}
\end{document}